\newtheorem{theorem}{Theorem}[section]
\newtheorem*{theorem*}{Theorem}
\newtheorem{lemma}[theorem]{Lemma}
\newtheorem{definition}[theorem]{Definition}
\theoremstyle{remark}
\newtheorem{remark}[theorem]{Remark}
\newtheorem{example}[theorem]{Example}
\def\R{\mathbb R}
\def\K{\mathbb K}
\def\Z{\mathbb Z}
\def\norma{\|\cdot\|}
\def\XnX{(X,\|\cdot\|_X)}
\def\YnY{(Y,\|\cdot\|_Y)}
\title{Voidness of strict convexity in non-Archimedean fuzzy normed spaces}
\begin{document}

\author
{Javier~Cabello~Sánchez, José~Navarro~Garmendia*}
\address{*Corresponding author: Departamento de Matem\'{a}ticas, Universidad de Extremadura, 
Avenida de Elvas s/n, 06006; Badajoz. Spain}
\email{coco@unex.es, navarrogarmendia@unex.es}

\thanks{Keywords: non-Archimedean fuzzy normed spaces; Mazur--Ulam Theorem; 
non-Archimedean normed spaces; strict convexity; fuzzy 2-normed spaces}
\thanks{Mathematics Subject Classification: 46S10, 26E30}

\begin{abstract} 
In this short note, we show by elementary computations that the notion of 
non-Archimedean fuzzy normed (and 2-normed) spaces is void. Namely, 
there are no strictly convex spaces at all --not even
the zero-dimensional linear space.

Before this, we also study the case of strictly convex non-Archimedean normed 
spaces; in this setting we see that the only nonzero linear space (defined over 
an arbitrary non-Archimedean field) that satisfies this property is the 
one-dimensional linear space over $\mathbb{Z}/3\mathbb{Z}$. 

Consequently, the results that have been proven for this class of spaces, like 
the Mazur-Ulam Theorem, are either trivial or empty statements. 
\end{abstract}

\maketitle

\section{Introduction}

The so called Mazur-Ulam theorem reads as follows:
\begin{theorem*}[Mazur-Ulam]
Let $\XnX$ and $\YnY$ be real normed spaces.  
%\\ If $T:\XnX\to\YnY$ is an onto isometry 
If $T:X\to Y$ is a bijective isometry 
such that $T(0)=0$ then $T$ is a linear map. 
\end{theorem*}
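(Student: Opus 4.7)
The strategy is to carry out the classical Mazur--Ulam argument, which splits into a reduction to midpoint preservation and an iterative construction showing that midpoints are indeed preserved by $T$.

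For the reduction, note that $T$ is automatically $1$-Lipschitz, hence continuous. If we can establish the midpoint identity $T((x+y)/2)=(T(x)+T(y))/2$ for all $x,y\in X$, then setting $y=0$ gives $T(x/2)=T(x)/2$, and applying the identity to the pairs $(x,y)$ and $(0,x+y)$ yields $T(x+y)=T(x)+T(y)$, i.e.\ additivity. Together with $T(0)=0$, additivity plus continuity upgrade to $\R$-linearity by the usual dyadic-rationals density argument.

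For the midpoint identity, fix $x,y\in X$, set $M=(x+y)/2$ and $M'=(T(x)+T(y))/2$, and build a descending chain of subsets of $X$:
\begin{align*}
W_0 &:= \{z\in X : \|z-x\|_X=\|z-y\|_X=\tfrac12\|x-y\|_X\}, \\
W_{n+1} &:= \{z\in W_n : \|z-w\|_X\le \tfrac12\,\mathrm{diam}(W_n)\ \text{for all}\ w\in W_n\}.
\end{align*}
The reflection $\sigma(z):=x+y-z$ is an isometry of $X$ that fixes $M$, and an easy induction shows that each $W_n$ is $\sigma$-invariant and contains $M$. The decisive step is that, for every $z\in W_n$, $\sigma(z)\in W_n$ together with $\|z-\sigma(z)\|_X=2\|z-M\|_X$ forces $\|z-M\|_X\le\tfrac12\mathrm{diam}(W_n)$; this yields $\mathrm{diam}(W_{n+1})\le\tfrac12\mathrm{diam}(W_n)$, so the diameters shrink geometrically to zero.

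Because $T$ is a surjective isometry, it sends $W_n$ bijectively onto the analogously defined set $W_n'\subseteq Y$ (with $x,y$ replaced by $T(x),T(y)$). Hence $T(M)$ and $M'$ both lie in $W_n'$ for every $n$, and $\mathrm{diam}(W_n')\to 0$ forces $T(M)=M'$, as required. The main hurdle I foresee is the diameter-halving step: one must verify carefully that $\sigma$-invariance is inherited at each level, since it is precisely this property that lets the reflection trick extract the inequality $\|z-M\|_X\le \mathrm{diam}(W_n)/2$ throughout the iteration.
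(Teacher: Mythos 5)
The paper never proves this statement: the Mazur--Ulam theorem is quoted in the introduction purely as classical background, so there is no in-paper argument to compare yours against. Judged on its own, your proposal is the standard successive-bisection proof of Mazur and Ulam, and it is essentially correct: the reduction from the midpoint identity to additivity (and thence, using that an isometry is continuous and that the dyadic rationals are dense, to $\R$-linearity) is right; the sets $W_n$ are the usual ones; and surjectivity of $T$ is correctly invoked so that $T(W_n)=W_n'$, trapping both $T(M)$ and $M'$ in sets of vanishing diameter. One bookkeeping point is slightly misplaced: the inequality $\mathrm{diam}(W_{n+1})\le\frac12\mathrm{diam}(W_n)$ is immediate from the definition of $W_{n+1}$ (take $w=z'$ with $z,z'\in W_{n+1}\subseteq W_n$); what the reflection $\sigma(z)=x+y-z$ actually buys you is that $M\in W_{n+1}$, i.e.\ that the shrinking sets remain nonempty and all contain the midpoint, via $\|z-\sigma(z)\|_X=2\|z-M\|_X\le\mathrm{diam}(W_n)$ for $z\in W_n$ together with the $\sigma$-invariance you establish by induction. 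You should also record explicitly that $\mathrm{diam}(W_0)\le\|x-y\|_X<\infty$, so that the geometric decay really does force the diameters to zero. With those two clarifications the argument is complete.
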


A first look to the non-Archimedean terrain rapidly shows that the situation 
there is different. Consider some $p$-adic field $\mathbb{Q}_p$ and the map 
$f \colon \mathbb{Q}_p \to \mathbb{Q}_p$ defined as $f(x) := 1/x$, if $|x|=1$, 
and $f(x) = x$ otherwise. It is clear that $f$ is a bijective isometry that maps 
0 to 0 but is certainly not linear (see also \cite[Proposition 1]{KubzdelaUno}). 

The notion of strictly convex non-Archimedean normed space was introduced 
in~\cite{Moslehian1} with the aim of finding a class of spaces where a 
non-Archimedean version of the Mazur-Ulam theorem were true. This paper has 
attracted quite some interest (see, e.g.,~\cite{KangKohCho, KohKang, Kubzdela, Park}), 
probably due to the fact that the authors (apparently) succeeded in proving 
such a result (\cite[Theorem 2.2]{Moslehian1}). 

Nevertheless, the authors did not provide %in~\cite{Moslehian1} 
any example of a strictly convex non-Archimedean space, and, in fact, A. Kubzdela has 
already pointed out that their existence is ``quite a rarity'' (cf. \cite{KubzdelaUno}, 
Theorem 2 and Remark below). In this note, we prove in Lemma \ref{MenosUnoMasDos} 
that the only non-Archimedean normed space that is strictly convex is the 
one-dimensional linear space over $\mathbb{Z}/3\mathbb{Z}$. Needless to say, 
a version of the Mazur-Ulam theorem trivially holds for this space\ldots\ as 
any bijection (or injection, or surjection) of $\Z/3\Z$ that sends 0 to 0 is linear!

In the same quest for non-Archimedean Mazur-Ulam-type results, other authors 
have also introduced the class of strictly convex fuzzy normed (\cite{KangKohCho}) 
and fuzzy 2-normed spaces (\cite{KohKang}). These notions have been inspired by 
the one given in~\cite{Moslehian1}, and we also prove that there are no spaces 
satisfying their definitions --not even the zero linear space.

\section{Normed spaces}

The first two definitions are a commonplace in non-Archimedean analysis, and we 
reflect them just for the sake of completeness. 

\begin{definition}
A {\em non-Archimedean field} is a field $\K$ equipped with a non-Archimedean 
valuation $|\cdot|$; that is to say, equipped with a function $|\cdot|:\K\to[0,\infty)$ such that 
\begin{enumerate}
\item[i)] $|r | = 0$ if and only if $r = 0$, 
\item[ii)] $|r s| = |r ||s|$, 
\item[iii)] $|r + s| \leq max\{|r |, |s|\}$ for all $r, s \in K$. 
\end{enumerate}
\end{definition}

\begin{remark}\label{trivialZkZ}
Any finite field $\mathbb{F}_q$ is a non-Archimedean field in a unique way: 
equipped with the trivial valuation $|a| = 1$, for any non-zero 
element $a\in \mathbb{F}_q$, and $|0|=0$. 
\end{remark}

\begin{definition}
A {\em non-Archimedean normed space} is a linear space $X$ over a non-Archimedean 
field $(\K , |\cdot|)$ that is endowed with a non-Archimedean norm; that is to 
say, endowed with a function $\norma:X\to[0,\infty)$ such that 
\begin{enumerate}
\item[iv)] $\|x\| = 0$ if and only if $x = 0$, 
\item[v)] $\|r x\| = |r |\|x\|$,  for all $r \in K, x\in X$.     
\item[vi)] $\|x + y\| \leq max\{\|x\|,\|y\|\}$ for all $x,y\in X$.     
\end{enumerate}
\end{definition}

\begin{remark}
The only possible (non-Archimedean) norm on
a one-dimensional linear space over a finite field $\mathbb{F}_q$ is $\|x\|=\lambda>0$ 
for every nonzero vector $x \in \mathbb{F}_q$, and $\|0\|=0$. 
\end{remark}

\begin{remark}\label{remisos} %[\cite{Narici}, Example 3]
On a non-Archimedean normed space $X$, ``any triangle is isosceles"; 
that is to say, for any $x, y \in X$, 
%$\| y \| < \| x \| $ implies that $\| x + y \| = \| x \|$, for any $x , y \in X$.
$$ \| y \| < \| x \| \quad \Rightarrow \quad \| x + y \| = \| x \| \ . $$
\end{remark}

\begin{definition}[\cite{Moslehian1}]\label{defnans} 
A non-Archimedean normed space $X$ over a  field $(\K , | \cdot |)$ such that 
$|2|=1$ is said to be {\em strictly convex} if $\|x+y\|=\max\{\|x\|,\|y\|\}$ 
and $\|x\|=\|y\|$ imply $x=y$. 
\end{definition}

%We think  it is easier to see the meaning of this definiton by saying that 
In other words, a non-Archimedean normed space $X$ over $\K$ is strictly convex if
\begin{enumerate}
\item[(SC1)] the valuation of $\K$ satisfies $|2|=1$. 
\item[(SC2)] for any pair of vectors $x,y \in X$, $\|x\|=\|y\|=\|x+y\|$ implies $x=y$.
\end{enumerate}

Observe that (SC2) may also be rephrased by saying that there are no equilateral triangles; in other words, that for any pair of distinct vectors $x \neq y \in X$, 
$$ \| y \| = \| x \| \quad \Rightarrow \quad \|x+y \| < \| x \|\ .$$ 
%which is equivalent to saying 
%This is clearly a tremendously restrictive hypothesis (specially in  presence of (SC1), that rules out 2-adic fields) and having in mind that every triangle is isosceles. % (Remark~\ref{remisos}).

\medskip
%Before our {\em main result}, we must state this obvious fact: 

\begin{example}\label{Z3Z}
The $\Z /3\Z$-linear space $\Z/3\Z$ is strictly convex. 
\end{example}

Now we may proceed with the only result that actually needs a proof in this paper: 

\begin{lemma}\label{MenosUnoMasDos}
If $X$ is a (non-zero) strictly convex non-Archimedean normed space, 
then $X$ is linearly isometric to $ \Z/3\Z$ endowed with the norm 
$ \| -1 \| = \| 1 \| = \lambda $, for some $\lambda\in(0,\infty)$. 
\end{lemma}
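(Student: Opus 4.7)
The strategy is to exploit (SC2) systematically: starting from a nonzero $x\in X$, I will repeatedly pair it with scalar multiples $ax$ of the same norm and combine (SC2) with the ultrametric isosceles property (Remark~\ref{remisos}) to force severe constraints, first on the field $\K$ and then on the $\K$-dimension of $X$. I expect the delicate step to be identifying $\K$, where the right choice of test vectors is essential; the remaining steps should reduce cleanly to the ultrametric inequality.

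\smallskip\noindent\emph{Step 1 (characteristic).} By (SC1), $|2|=1$, hence $\|2x\|=\|{-2x}\|=\|x\|$ for every $x\in X$. The pair $(x,2x)$ is always distinct when $x\neq 0$, so (SC2) yields $\|3x\|<\|x\|$, i.e.\ $|3|<1$. If one further had $|3|>0$, then $3x\neq 0$ would make $(x,-2x)$ distinct as well, and (SC2) would give the absurdity $\|{-x}\|<\|x\|$. Therefore $|3|=0$, i.e.\ $\mathrm{char}\,\K=3$.

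\smallskip\noindent\emph{Step 2 (field).} For any $a\in\K$ with $0<|a|<1$, Remark~\ref{remisos} gives $|1+a|=1$, so $x$ and $(1+a)x$ form a distinct pair of equal (nonzero) norm; (SC2) would then force $|2+a|<1$, contradicting the isosceles identity $|2+a|=|2|=1$. Hence the valuation on $\K$ is trivial. Applying the same recipe to the pair $(x,ax)$ for an arbitrary $a\in\K\setminus\{0,1\}$ yields $|1+a|<1$; under a trivial valuation this forces $a=-1$, so together with Step 1 we conclude $\K=\Z/3\Z$.

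\smallskip\noindent\emph{Step 3 (dimension).} Suppose, for contradiction, that $x,y\in X$ are $\K$-linearly independent. If $\|x\|=\|y\|$, linear independence rules out $x=\pm y$, and (SC2) applied to both $(x,y)$ and $(x,-y)$ gives $\|x+y\|,\|x-y\|<\|x\|$; the ultrametric inequality then forces $\|2x\|=\|(x+y)+(x-y)\|<\|x\|$, contradicting $|2|=1$. If instead $\|x\|>\|y\|$ (WLOG), Remark~\ref{remisos} gives $\|x+y\|=\|x\|$, so $(x,x+y)$ is a distinct pair of equal norm; (SC2) would then give $\|2x+y\|<\|x\|$, again contradicted by the isosceles identity $\|2x+y\|=\|2x\|=\|x\|$. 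Hence $\dim_\K X\leq 1$, and for any $x_0\neq 0$ the assignment $1\mapsto x_0$ is the claimed linear isometry, with $\lambda=\|x_0\|$.
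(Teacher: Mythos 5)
Your proof is correct; every application of (SC2) (always via its contrapositive: distinct vectors of equal norm have a sum of strictly smaller norm) and of the isosceles property checks out, and the three steps together do pin down $\K=\Z/3\Z$ and $\dim_\K X=1$. The overall mechanism is the same as the paper's --- characteristic $3$ from the pair $(2x,-x)$, then the identity $(x+y)+(x-y)=2x$ against $|2|=1$ --- but your organization is genuinely different. The paper never isolates the field: after Step 1 it runs a single argument on an arbitrary pair of nonzero vectors $x,y$ with $y\neq\pm x$, normalized by $\|y\|\leq\|x\|$ and $\|x-y\|\leq\|x+y\|$, and both of its cases contradict (SC2) using the sum $(x+y)+(x-y)=2x$; this handles ``$y$ a scalar multiple of $x$'' and ``$y$ independent of $x$'' in one stroke, leaving $X=\{0,x,-x\}$ and hence $\K=\Z/3\Z$ implicitly. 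You instead first prove the valuation is trivial and $\K=\Z/3\Z$ by testing $x$ against its own scalar multiples $(1+a)x$ and $ax$, and only then bound the dimension; your unequal-norm case also uses a different witness pair, $(x,x+y)$ with sum $2x+y$, where the paper uses $(x+y,x-y)$. What your route buys is a cleaner conceptual separation (field first, then dimension) and an explicit proof that the valuation must be trivial, which the paper leaves implicit; what the paper's route buys is brevity, since one well-chosen normalization of a generic pair $(x,y)$ makes the separate field analysis unnecessary. Two tiny points worth making explicit in your write-up: in Step 2 you pass from ``no $a$ with $0<|a|<1$'' to ``the valuation is trivial,'' which uses $|a|>1\Rightarrow|a^{-1}|<1$; and in Step 3 the final sentence should note that $1\mapsto x_0$ is an isometry because every nonzero element of $X\cong\K x_0$ has norm $|a|\,\|x_0\|=\|x_0\|$ by triviality of the valuation.
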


\begin{proof}

As $|2|=1$, for any non-zero vector $x\in X$ we have that
$$\|2x\|=\|x\|=\|-x\|.$$
Of course, $2x+(-x)=x,$ so (SC2) implies that $2x=-x$; in other 
words, $3x =0$ for every $x\in X$, and we conclude that $3=0$ in $\K$. 

Now suppose there are two non-zero vectors $x,y\in X$ such that 
$y\neq \pm x$ and we will arrive to a contradiction.

Without loss of generality we may also assume that 
$$\|y\|\leq\|x\| \quad \mbox{ and } \quad \|x-y\|\leq\|x+y\| \ . $$

%By (SC1), oberve that $x+y$ and $x-y$ are always distinct.

If $\|y \|  < \| x \| $, then $x+ y$ and $x-y$ are distinct vectors (by (SC1)) 
such that $\| x + y \| = \| x \| = \| x-y \|$, and whose sum mantains the norm
$$\|(x+y) + (x-y)\| = \|2x\|= \|x\| = \| x + y \|   \ , $$ in contradiction with (SC2).

If $\|y \| = \| x \| $, then (SC2) implies the absurd chain of inequalities
$$ \| x + y \| < \| x \| = \| 2 x \| = \| ( x + y ) + (x-y) \| \leq \| x + y \| \ . $$

\end{proof}

\section{Fuzzy spaces}

It is hard to believe, but things go even worse when we move to fuzzy normed 
spaces: the mistake we find in Definition~\ref{defnans} is not replicated but 
increased. %What happens with $x,-x$ and $2x$ at the beginning of the proof of Lemma~\ref{MenosUnoMasDos} happens again with the fuzzy versions of the definition of strict convexity; but this is nothing compared with what we find in the definitions of~\cite{KangKohCho} and~\cite{KohKang}. 
To be precise, 

\begin{definition}\label{def021} A {\em non-Archimedean fuzzy normed space} is a linear space $X$ over a non-Archimedean field $(\K , |\cdot|)$ equipped with a non-Archimedean fuzzy norm; that is to say, equipped with a function $N:X\times\R\to[0,1]$ such that, for all $x,y\in X$ and all $s,t\in\R$,
\begin{enumerate}
\item[(N1)] for $t\leq 0,$ $N(x,t)=0$, 
\item[(N2)] for $t>0$, $N(x,t)=1$ if and only if $x=0$,
\item[(N3)] $N(cx,t)=N(x,t/|c|)$ for $c\neq 0$,
\item[(N4)] $N(x+y,\max\{s,t\})\geq \min\{N(x,s),N(y,t)\}$,
\item[(N5)] $N(x,*)$ is a nondecreasing function of $\R$ and $\lim_{t\to\infty}N(x,t)=1$. 
\end{enumerate}
\end{definition}

\begin{definition}[\cite{KangKohCho}]\label{def022}
A non-Archimedean fuzzy normed space $(X,N)$ is said to be strictly convex if 
$N(x+y,\max\{s,t\})=\min\{N(x,s),N(y,t)\}$ and $N(x,s)=N(y,t)$ 
imply $x=y$ and $s=t$.
\end{definition}

%This can be rewriten as 
%$$N(x,s)=N(y,t)= N(x+y,\max\{s,t\}) \quad \Rightarrow \quad x=y \ \mbox{ and } \ s=t \ . $$

But any fuzzy norm $N$ satisfies, 
by (N2), that for any $s, t\in(0,\infty)$, 
$$ N(0,s) = N(0,t) = N(0+0,\max\{s,t\})=1 \ ,$$ 
so no fuzzy normed space $(X,N)$ may fulfil Definition~\ref{def022} --not even the zero linear space.

\begin{definition}\label{def21} A {\em non-Archimedean fuzzy 2-normed space} is a linear space $X$ over a non-Archimedean field $( \K, |\cdot |) $ together with a fuzzy 2-norm; that is to say, with a function $N:X^2\times\R\to[0,1]$ such that, for all $x,y\in X$ and all $s,t\in\R$,
\begin{enumerate}
\item[(2N1)]  for $t\leq 0,$ $N(x,y,t)=0$,
\item[(2N2)] for $t>0$, $N(x,y,t)=1$ if and only if $x$ and $y$ are linearly dependent,
\item[(2N3)] $N(x,y,t)=N(y,x,t),$
\item[(2N4)] $N(x,cy,t)=N(y,x,t/|c|)$ for $c\neq 0$,
\item[(2N5)] $N(x,y+z,\max\{s,t\})\geq \min\{N(x,y,s),N(x,z,t)\}$,
\item[(2N6)] $N(x,y,*)$ is a nondecreasing function of $\R$ and $\lim_{t\to\infty}N(x,y,t)=1$. 
\end{enumerate}
\end{definition}

\begin{definition}[\cite{KohKang}]\label{def22}
A non-Archimedean fuzzy 2-normed space $(X, N)$ is said to be strictly convex if 
$N(x,y+z,\max\{s,t\})=\min\{N(x,y,s),N(x,z,t)\}$ and $N(x,y,s)=N(x,z,t)$ 
imply $y=z$ and $s=t$.
\end{definition}

%This can be rewriten as 
%$$N(x,y,s)=N(x,z,t)= N(x,y+z,\max\{s,t\}) \quad \Rightarrow \quad y=z \ \mbox{ and } \ s=t \ . $$

Again, any fuzzy 2-norm $N$ satisfies, by (2N2), that, for any $x\in X, s, t\in(0,\infty)$, 
\begin{equation}\label{eqn2n}
N(x,x,s)= N(x,2x,t)=N(x,3x,\max\{s,t\})=1, 
\end{equation}
so there is no way that any fuzzy 2-normed space $(X,N)$ fulfils 
Definition~\ref{def22} --not even the 0 space, since we can subtitute $x$ by 
0, $s$ by $1$ and $t$ by $2$ in~(\ref{eqn2n}). Observe, furthermore, that the 
problem with Definition~\ref{defnans} still remains: for any $t\in\R$ and 
$x\in X$ we have $1=N(x,x,t)= N(x,2x,t)=N(x,-x,t)$, so we obtain $3x=0$ 
for any $x\in X$.

\section*{Acknowledgments}
{Supported in part by DGICYT projects MTM2016-76958-C2-1-P and PID2019-103961GB-C21 
(Spain), Feder funds and Junta de Extremadura programs GR-15152, IB-16056 and IB-18087.}

%We would like to thank Professors Juan B. Sancho de Salas and José I. Burgos Gil. } 

\bibliographystyle{plain}

\begin{thebibliography}{w}
\bibitem{KangKohCho}
{\sc Kang, D.}, {\sc Koh, H.} and {\sc Cho, I. G.}
{\em On the Mazur-Ulam theorem in non-Archimedean fuzzy normed spaces. }
Appl. Math. Lett. {\bf 2012}, 301--304. 
\url{http://dx.doi.org/10.1016/j.aml.2011.09.004}

\bibitem{KohKang}
{\sc Koh, H.} and {\sc Kang, D.} 
{\em On the Mazur-Ulam problem in non-Archimedean fuzzy 2-normed spaces. }
J. Inequal. Appl. {\bf 2013}, 507. 
\url{https://doi.org/10.1186/1029-242X-2013-507}

\bibitem{KubzdelaUno}
{\sc Kubzdela, A.} 
{\em Isometries, Mazur-Ulam theorem and Alexsandrov problem for non-Archimedean normed spaces.}
Nonlinear Anal. {\bf 75}, 2012, 2060--2068.
\url{https://doi:10.1016/j.na.2011.10.006}



\bibitem{Kubzdela}
% {\sc Kubzdela, Albert}
{\sc Kubzdela, A.} 
``The distance preserving mappings and isometrics defined on non-Archimedean Banach spaces'' 
in {\em Advances in ultrametric analysis,}
205--215, Contemp. Math., {\bf 704}, Amer. Math. Soc., Providence, RI, 2018.
% \bibitem{Moslehian2}
% {\sc Moslehian, Mohammad Sal }
% {\sc Moslehian, M. S.}
% {\em The Jensen functional equation in non-Archimedean normed spaces. }
% J. Funct. Spaces Appl. {\bf 7(1)}, 2009, 13--24.
\bibitem{Moslehian1}
% {\sc Moslehian, Mohammad Sal; Sadeghi, Ghadir }
{\sc Moslehian, M. S.} and {\sc Sadeghi, G.}
{\em A Mazur-Ulam theorem in non-Archimedean normed spaces. }
Nonlinear Anal. {\bf 69(10)}, 2008, 3405--3408.

%\bibitem{Narici}
%{\sc Narici, L.} and {\sc Beckenstein, E.}
%{\em Strange Terrain--Nonarchimedean Spaces}
%The American Mathematical Monthly, {\bf  Vol. 88, No. 9}, 1981, 667--676. 

\bibitem{Park}
{\sc Park, C.} and {\sc Alaca, C.} 
{\em An introduction to 2-fuzzy $n$-normed linear spaces and a new perspective to the Mazur-Ulam problem.}
J. Inequal. Appl. {\bf 2012(14)}, 2012, 17 pp. 
\end{thebibliography}

\end{document}